\theoremstyle{plain}
\newcommand{\ba}{\mathbf{a}}
\newcommand{\bb}{\mathbf{b}}
\title[Gradings]{On Gradings in Khovanov homology and sutured Floer homology}
\author{J. Elisenda Grigsby}
\thanks{JEG was partially supported by a Viterbi-endowed MSRI postdoctoral fellowship and NSF grant number DMS-0905848.}
\address{Boston College; Department of Mathematics; 301 Carney Hall; Chestnut Hill, MA 02467}
\email{grigsbyj@bc.edu}
\author{Stephan M. Wehrli}
\thanks{SMW was partially supported by an MSRI postdoctoral fellowship.}
\address{Syracuse University; Mathematics Department; 215 Carnegie; Syracuse, NY 13244}
\email{smwehrli@syr.edu}
\theoremstyle{plain}
\newtheorem{theorem}{Theorem}[section]
\newtheorem{lemma}[theorem]{Lemma}
\newtheorem{construction}[theorem]{Construction}
\theoremstyle{definition}
\newcommand{\Szabo}{{Szab{\'o}} }
\newcommand{\Juhasz}{{Juh{\'a}sz} }
\newcommand{\R}{\ensuremath{\mathbb{R}}}
\newcommand{\Z}{\ensuremath{\mathbb{Z}}}
\newcommand{\bL}{\ensuremath{\mathbb{L}}}
\newcommand{\boldalpha}{\ensuremath{\mbox{\boldmath $\alpha$}}}
\newcommand{\boldbeta}{\ensuremath{\mbox{\boldmath $\beta$}}}
\newcommand{\boldSigma}{\ensuremath{\mbox{\boldmath $\Sigma$}}}
\newcommand{\cP}{\ensuremath{\mathcal{P}}}
\newcommand{\cI}{\ensuremath{\mathcal{I}}}
\newcommand{\spincs}{\ensuremath{\mathfrak{s}}}
\begin{document}
\bibliographystyle{plain}

\begin{abstract} We discuss generalizations of Ozsv{\'a}th-Szab{\'o}'s spectral sequence relating Khovanov homology and Heegaard Floer homology, focusing attention on an explicit relationship between natural $\Z$ (resp., $\frac{1}{2}\Z$) gradings appearing in the two theories.  These two gradings have simple representation-theoretic (resp., geometric) interpretations, which we also review.
\end{abstract}
\maketitle

\section{Introduction}
The Ozsv{\'a}th-\Szabo spectral sequence \cite{MR2141852} relating Khovanov homology \cite{MR1740682} and Heegaard-Floer homology \cite{MR2113019} has inspired an abundance of work in these and related fields (see, e.g., \cite{MR2250492}, \cite{MR2275002}, \cite{GT07060741}, \cite{GT07104300}, \cite{MR2601010}, \cite{MR2576686}, \cite{GT08054423}, \cite{GT08071341}, \cite{GT08082817}, \cite{GT08093293}, \cite{MR2652216}, \cite{GT09033746}, \cite{GT09090816}, \cite{MR2608442}, \cite{SurfDecomp}, \cite{AnnularLinks}), yielding applications in areas as disparate as contact geometry \cite{MR2652216} and Dehn surgery \cite{GT08071341}.  A particularly stunning result with roots in the circle of ideas found in \cite{MR2141852} is Kronheimer-Mrowka's proof that Khovanov homology detects the unknot \cite{GT10054346}.

The Ozsv{\'a}th-\Szabo spectral sequence has been extended, by L. Roberts in  \cite{GT07060741} and the authors in \cite{MR2601010, AnnularLinks}, to yield a relationship between the ``sutured" versions of Khovanov homology \cite{MR2113902, MR2124557} and Heegaard Floer homology \cite{MR2253454} one naturally associates to tangles in certain simple sutured manifolds.  Applying Gabai's sutured manifold technology \cite{MR723813} to these homology theories, an idea pioneered by \Juhasz \cite{MR2390347}, reveals that the Ozsv{\'a}th-\Szabo spectral sequence exhibits a great deal more algebraic structure than originally believed.  See \cite{SurfDecomp} as well as \cite{MR2601010, AnnularLinks} for more details.

Our goal in this short note is to establish an explicit relationship between certain natural gradings appearing in Khovanov homology and sutured Heegaard-Floer homology.  This relationship is already hinted at in the work of Plamenevskaya \cite{MR2250492} and implicit in the work of L. Roberts \cite{GT07060741}.  Our motivation here is simply to provide an easily--digestible account, complete with the appropriate representation-theoretic and geometric context, for the benefit of the Khovanov and Heegaard-Floer communities, for whom the result appears to be of independent interest.
{\flushleft {\bf Acknowledgements:}} We thank Denis Auroux and Catharina Stroppel for a number of interesting discussions, as well as the MSRI spring 2010 semester-long program on Knot Homology Theories for making these discussions possible.  The relationship we describe here between Floer-theoretic and representation-theoretic gradings was observed independently (in a somewhat different context) by Auroux and Stroppel.  The first author would also like to thank the organizers of Jaco's 70th Birthday Conference for an enjoyable and enlightening weekend.  Happy birthday, Bus!

\section{Background on Ozsv{\'a}th-\Szabo spectral sequence} \label{sec:background}
Let $\bL \subset S^3$ be a link, $\overline{\bL} \subset S^3$ its mirror, and $Y$ a closed, connected, oriented $3$--manifold.  In what follows, $\widetilde{Kh}(\bL)$ will denote the reduced Khovanov homology of $\bL$, defined in \cite{MR2034399}, and $\widehat{HF}(Y)$ will denote the (hat version of) the Heegaard-Floer homology of $Y$, defined in \cite{MR2113019}.  Furthermore, if $(B,\partial B) \subset (X,\partial X)$ is a properly-imbedded codimension $2$ submanifold of a manifold $X$, then $\boldSigma(X,B)$ will denote the double-branched cover of $X$, branched over $B$, and $\widetilde{B}$ will denote its preimage, $p^{-1}(B)$, under the covering map $p: \boldSigma(X,B) \rightarrow X$.  Throughout the paper, all Khovanov and Heegaard-Floer homology theories will be considered with $\mathbb{F} = \mathbb{Z}/2\mathbb{Z}$ coefficients.  Ozsv{\'a}th-\Szabo proved:

\begin{theorem} \cite[Thm. 1.1]{MR2141852} \label{thm:OzSz} Let $\bL \subset S^3$ be a link.  There is a spectral sequence whose $E^2$ term is $\widetilde{Kh}(\overline{\bL})$ and whose $E^\infty$ term is $\widehat{HF}(\boldSigma(S^3,\bL))$.
\end{theorem}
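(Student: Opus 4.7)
The plan is to deduce the theorem from \Ozsvath-\Szabo's \emph{link surgery spectral sequence} for a framed link in a closed $3$-manifold, applied to a specific framed link inside $\boldSigma(S^3,\bL)$ built from the crossings of a planar diagram. Fix a diagram $D$ of $\bL$ with $n$ crossings $c_1,\ldots,c_n$ and choose disjoint $3$-balls $B_i$ about each crossing. The tangle $\bL\cap B_i$ and its $0$- and $1$-resolutions are three rational tangles in $B_i$, and their double-branched covers are three Dehn fillings of a single torus whose slopes pairwise intersect once. Globally this says that the manifolds $\boldSigma(S^3,\bL)$, $\boldSigma(S^3,\bL_{i,0})$, and $\boldSigma(S^3,\bL_{i,1})$ are mutually related by Dehn surgery on a framed circle $K_i\subset\boldSigma(S^3,\bL)$; the $K_i$ may be arranged to be pairwise disjoint, producing a framed link $K=K_1\cup\cdots\cup K_n$.

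Applying the link surgery spectral sequence to $K$ produces a spectral sequence whose $E^1$ page is
\[
E^1 \;=\; \bigoplus_{s\in\{0,1\}^n}\widehat{HF}\bigl(\boldSigma(S^3,\bL_s)\bigr),
\]
where $\bL_s$ denotes the complete resolution of $D$ specified by $s$. The $d_1$ differential is the sum of $2$-handle cobordism maps between adjacent vertices of the cube, and the spectral sequence converges to $\widehat{HF}$ of the ``completely unsurgered'' manifold $\boldSigma(S^3,\bL)$. Each $\bL_s$ is an unlink with, say, $k_s$ components, so $\boldSigma(S^3,\bL_s)\cong\#^{k_s-1}(S^1\times S^2)$ and $\widehat{HF}$ of this manifold is isomorphic to $\Lambda^*(\mathbb{F}^{k_s-1})$, which has the same rank as the reduced Khovanov vector space attached to $\bL_s$. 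A natural identification respecting the underlying Frobenius-algebra structure can then be arranged at every vertex.

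It remains to identify the $d_1$ differential with the reduced Khovanov differential. Each edge of the cube corresponds to a $2$-handle cobordism that differs from a product cobordism only in the neighborhood of one crossing, where it is a pair-of-pants surgery cobordism between two connected sums of $S^1\times S^2$'s whose numbers of summands differ by one. The central local computation, which is where essentially all the geometric content of the theorem lies, is to show that the induced cobordism map on $\widehat{HF}$ equals either the Khovanov merge map $V\otimes V\to V$ or the split map $V\to V\otimes V$, depending on whether the resolution merges or splits a circle. This is carried out by choosing a standard genus-one Heegaard triple for the surgery cobordism and explicitly enumerating the holomorphic triangles that contribute. Functoriality under disjoint union then assembles the local maps into the global Khovanov differential, giving $E^2\cong\widetilde{Kh}(\overline{\bL})$; the mirror $\overline{\bL}$ appears because the convention identifying $0$- and $1$-smoothings on the Khovanov side interacts with the branched-cover construction so as to reverse the orientation of the cube.
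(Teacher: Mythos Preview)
Your proposal is correct and follows essentially the same approach as the paper's sketch: build a framed link in $\boldSigma(S^3,\bL)$ from the crossings via the double-branched-cover/Montesinos trick, apply the surgery exact triangle iteratively (equivalently, the link surgery spectral sequence) to obtain a cube-of-resolutions filtered complex, and then identify the vertex groups and edge maps with those of the reduced Khovanov complex. The paper additionally spells out the Heegaard multi-diagram realization (with polygon counts giving the higher-face maps), but this is just a concrete model for the same filtered complex you invoke abstractly.
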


\begin{proof}[Sketch of Proof] Ozsv{\'a}th-\Szabo built this spectral sequence by constructing a Floer-theoretic ``cube of resolutions" complex for $\widehat{HF}(\boldSigma(S^3,\bL))$ that looks very much like the  chain complex used to define the reduced Khovanov homology of $\overline{\bL}$.

More precisely, recall that if we are given an $n$--crossing diagram for a link $\overline{\bL} \subset S^3$ with crossings labeled $C_1, \ldots, C_n$ along with a marked point $z \subset \left(\bL - \bigcup_{i=1}^n N(C_i)\right)$, we can construct a chain complex computing $\widetilde{Kh}(\bL)$ by the following procedure (see \cite{MR1740682, MR1917056, MR2034399} for more details).
\begin{enumerate}
	\item Associate to each of the $2^n$ vertices of an $n$--dimensional cube a so-called ``complete resolution" of the diagram.  Specifically, the vertices can be put in one-to-one correspondence with the elements of $\{0,1\}^n$; to vertex $\cI \in \{0,1\}^n$ we then associate the resolved diagram whose crossings have been replaced by ``0" and ``1" resolutions (see Figure \ref{fig:res}) according to the prescription $\cI$.
	\item Apply Khovanov's (1+1)--dimensional TQFT to this cube.  This replaces the resolved diagram at vertex $\cI$ with the graded vector space $V^{\otimes k}$, where $V = \mathbb{F}_{-1} \oplus \mathbb{F}_{1}$ (the subscripts correspond to the grading) and $k$ is the number of unmarked components in the resolved diagram at $\cI$.  To the edges of the cube, Khovanov associates linear ``multiplication" and ``comultiplication" maps.
	\item This data defines a bigraded chain complex whose underlying vector space is the direct sum of all of the vector spaces at the vertices of the cube and whose differential is the sum of the linear maps along the edges of the cube.  The homology of this chain complex is a link invariant.
\end{enumerate}

\begin{figure}
\begin{center}
\resizebox{2in}{!}{\input{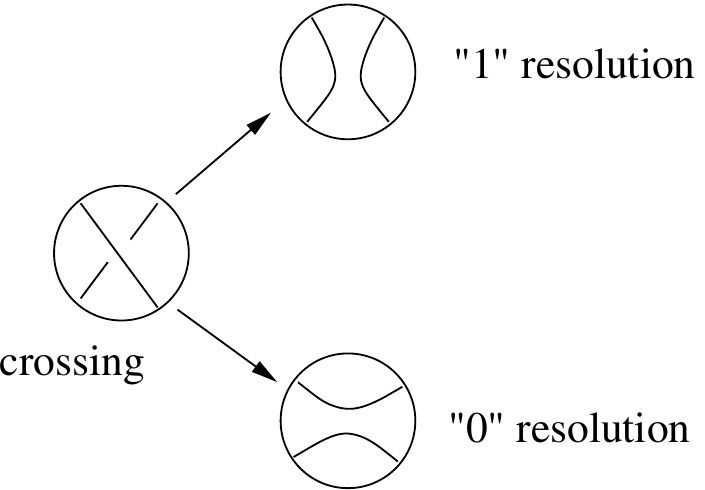_t}}
\end{center}
\caption{Replacing a crossing with a ``$0$" or ``$1$" resolution}
\label{fig:res}
\end{figure}

On the Heegaard-Floer side, Ozsv{\'a}th-\Szabo had previously proved the existence of an exact sequence associated to a framed knot $K$ in a closed, connected, oriented $3$--manifold $Y$ \cite[Thm. 9.12]{MR2113020}.  Namely, there is a long exact sequence relating the triple \[\{\widehat{HF}(K_{\mu}), \widehat{HF}(K_{\lambda}), \widehat{HF}(K_{\lambda + \mu})\},\] where $\mu$ denotes the meridian of $K$ in $Y$, $\lambda$ denotes the framing on $K$, and $K_{\alpha}$ denotes the $3$--manifold obtained by doing $\alpha$--surgery on $K \subset Y$ for $\alpha = \mu, \lambda, \lambda+\mu$.  Ozsv{\'a}th-Szab{\'o}'s crucial observation is that the branched double cover of the $3$--ball neighborhood $N(C_i)$ of each crossing $C_i$, branched along $L \cap N(C_i)$, is a solid torus, i.e., the neighborhood of a framed knot $K_i \subset \boldSigma(S^3,\bL)$.  Furthermore, replacing the crossing by the two different resolutions corresponds, in $\boldSigma(S^3,\bL)$, to performing alternate Dehn surgeries on $K_i$, yielding the two other $3$--manifolds related to $\boldSigma(S^3,\bL)$ in the long exact sequence for $K_i$.

Thus one can can compute $\widehat{HF}(\boldSigma(S^3,\bL))$ using a filtered chain complex constructed as an iterated mapping cone associated to the link $K_1 \cup \ldots \cup K_n \subset \boldSigma(S^3,\bL)$ obtained by taking the preimage of the (neighborhoods of the) crossings $C_1, \ldots, C_n$.  More precisely, the existence of the long exact sequence for each $K_i$ allows one to replace (up to homotopy) any chain complex $\widehat{CF}(\boldSigma(S^3,\bL))$ computing $\widehat{HF}(\boldSigma(S^3,\bL))$ with the mapping cone of a chain map \[f_1: \widehat{CF}(\boldSigma(S^3,\bL_{0, \infty, \ldots, \infty})) \rightarrow \widehat{CF}(\boldSigma(S^3,\bL_{1, \infty, \ldots, \infty}))\] where $\bL_{0, \infty, \ldots, \infty}$ (resp., $\bL_{1, \infty, \ldots, \infty}$) represents the link obtained from $\bL$ by replacing the crossing $C_1$ by its $0$--resolution (resp., its $1$--resolution).  Recall that the mapping cone of a chain map $f: \mathcal{C} \rightarrow \mathcal{C}'$ between chain complexes $\mathcal{C}$ and $\mathcal{C}'$ is the chain complex with underlying vector space $\mathcal{C} \oplus \mathcal{C}'\{-1\}$ and differential, $D$, given by the sum of the internal differentials $\partial_{\mathcal{C}}, \partial_{\mathcal{C}'}$ and the chain map $f$: \[D := \left(\begin{array}{cc}
				\partial_{\mathcal{C}} & 0\\
				f & \partial_{\mathcal{C}'}\end{array}\right).\]

Iterating this process over all crossings, one obtains a cube of resolutions similar to the Khovanov cube described above (in particular, the vertices and edge maps can be seen to agree).  However, there are additional higher-order differentials mapping along higher--dimensional faces of the cube.  These arise, at least in part, because each of the three Heegaard-Floer chain complexes in the long exact sequence for a knot is only {\em chain homotopic} (not {\em chain isomorphic}) to the mapping cone of the chain map between the other two.

The filtered chain complex computing $\widehat{HF}(\boldSigma(S^3,\bL))$, described above,  can be constructed explicitly using a so-called {\em Heegaard multi-diagram}.  To understand this, recall that, given a closed, connected, oriented $3$--manifold $Y$, one defines a chain complex computing $\widehat{HF}(Y)$ using the data of a generic self-indexing Morse decomposition of $Y$ with a single critical point of index $0$ and a single critical point of index $3$.  This yields a decomposition
\[Y = \mathcal{H}_{\alpha} \bigcup_{\Sigma_g} -\mathcal{H}_{\beta}\] of $Y$ into two solid handlebodies $\mathcal{H}_{\alpha}$ and $\mathcal{H}_{\beta}$.  One can then choose a corresponding pointed Heegaard diagram $(\Sigma_g, \boldalpha, \boldbeta, z)$, where
\begin{itemize}
	\item $\Sigma_g$ is a genus $g$ Heegaard surface,
	\item $\boldalpha = \{\alpha_1, \ldots, \alpha_g\}$ (resp., $\boldbeta = \{\beta_1, \ldots, \beta_g\}$) is a choice of $g$ circles in $\Sigma_g$, linearly-independent in $H_1(\Sigma_g;\Z)$, bounding a full set of compressing disks in $\mathcal{H}_\alpha$ (resp., $\mathcal{H}_\beta$), and
	\item $z \in \Sigma_g$ is a marked point in the complement of $\boldalpha \cup \boldbeta$.
\end{itemize}
After picking a generic complex structure on $\Sigma_g$ (see \cite[Sec. 3]{MR2113019}) and ensuring that the Heegaard diagram satisfies some additional technical assumptions (see \cite[Sec. 4]{MR2113019}), one then defines a chain complex from the data $(\Sigma_g, \boldalpha, \boldbeta, z)$, whose

\begin{itemize}
	\item generators are given by intersection points \[\mathbb{T}_\alpha \cap \mathbb{T}_\beta = (\alpha_1 \times \ldots \times \alpha_g) \cap (\beta_1 \times \ldots \times \beta_g) \subset Sym^g(\Sigma_g),\] where $Sym^g(\Sigma_g) := \Sigma^{\times g}/S_g$ is the space of unordered $g$--tuples of points in $\Sigma_g$, endowed with a nearly symmetric almost-complex structure, and
	\item differential is given by counting certain holomorphic maps of the disk into $Sym^g(\Sigma_g)$.
\end{itemize}

Now, if we have the data of a framed link $(L = K_1 \cup \ldots \cup K_n) \subset Y$, we can construct a {\em Heegaard multi-diagram} compatible with $L$ (and a corresponding filtered chain complex) by choosing
\begin{itemize}
	\item a {\em bouquet}, $B_L$, for $L$ \cite[Defn. 4.1]{MR2222356} and
	\item a self-indexing Morse function for $Y-N(B_L)$ with a single index $0$ critical point and no index $3$ critical points.
\end{itemize}

This specifies an ``unbalanced" Heegaard decomposition of $Y-N(B_L)$ which can be completed to a balanced Heegaard decomposition (and corresponding diagram) for any closed $3$--manifold obtained by Dehn filling $L$, just by appending the appropriate list of Dehn-filling curves $\beta_{g-n+1}, \ldots, \beta_g$ to the list $\{\beta_1, \ldots, \beta_{g-n}\}$ specifying $Y-N(B_L)$.  A Heegaard multi-diagram $\left(\Sigma_g, \boldalpha, \boldbeta_\mathcal{D},z\right)$ compatible with a set $\mathcal{D}$ of Dehn fillings of a link, $L \subset Y$, then simultaneously encodes the data of a Heegaard diagram for all Dehn fillings in $\mathcal{D}$.  In other words, one specifies a $g$--tuple, $\boldbeta_d$, of $\beta$ curves for each choice $d \in \mathcal{D}$, with the stipulation that $\{\beta_1, \ldots, \beta_{g-n}\}$ agree for all $\boldbeta_d$, as described above.  In the case of interest to us, $Y= \boldSigma(S^3, \bL)$, $N(L) = \bigcup_{i=1}^n\widetilde{N(C_i)}$, and $\mathcal{D} = \{0,1\}^n$.  The corresponding filtered chain complex has

\begin{itemize}
	\item generators given by the intersection points in $\bigcup_{d \in \mathcal{D}} \mathbb{T}_\alpha \cap \mathbb{T}_{\beta_d} $
	\item differential given by counting certain holomorphic polygons in $Sym^g(\Sigma_g)$, where
\begin{itemize}
	\item $2$--gons yield maps between generators associated to the same vertex of the cube of resolutions,
	\item $3$--gons (triangles) yield maps along edges of the cube, and
	\item $(k+2)$--gons yield maps along $k$--dimensional faces of the cube.
\end{itemize}
\end{itemize}

Please see \cite{MR2141852} for more details.

The Floer-theoretic filtered chain complex described above gives rise to a spectral sequence whose $E^2$ term is $\widetilde{Kh}(\overline{\bL})$ and whose $E^\infty$ term is $\widehat{HF}(\boldSigma(S^3, \bL))$.
\end{proof}

In \cite{GT07060741}, L. Roberts proved a refinement of the Ozsv{\'a}th-\Szabo spectral sequence for links in the solid torus complement of a standardly-imbedded unknot \[U = (z-\mbox{axis} \cup \infty) \subset  (S^3 = \mathbb{R}^3 \cup \infty).\]  Noting that this solid torus complement can be identified with the product sutured manifold $A \times I$ imbedded in $S^3$ as
\[A \times I := \{(r,\theta,z)\,\,|\,\,r \in [1,2], \theta \in (-\pi,\pi], z \in [0,1]\},\] we reinterpreted his result  in \cite{AnnularLinks} using the language of sutured manifolds, as follows:

\begin{theorem} \cite[Prop. 1.1]{GT07060741}, \cite[Thm. 2.1]{AnnularLinks} \label{thm:Annular} Let $\bL \subset A \times I$ be a link in the product sutured manifold $A \times I$.  Then there is a spectral sequence whose $E^2$ term is $Kh^*(\overline{\bL})$ and whose $E^\infty$ term is $SFH(\boldSigma(A \times I, \bL))$.
\end{theorem}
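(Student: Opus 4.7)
The strategy is to adapt the iterated mapping cone construction from the proof of Theorem~\ref{thm:OzSz} to the sutured setting. Given an annular diagram $D$ for $\bL \subset A \times I$ with crossings $C_1, \ldots, C_n$, the preimage under $p\colon \boldSigma(A\times I, \bL) \to A\times I$ of a small $3$--ball neighborhood of $C_i$ is a solid torus; its core is a framed knot $\widetilde{C_i}$ whose two nontrivial Dehn fillings realize the branched double covers of the two resolutions of $D$ at $C_i$. Iterating a surgery mapping cone over all $n$ crossings should produce a cube-of-resolutions chain complex computing $SFH(\boldSigma(A\times I, \bL))$.

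The central ingredient is a surgery exact triangle for sutured Floer homology: for a framed knot $K$ in a balanced sutured manifold $(M,\gamma)$, one wants an exact triangle relating the $SFH$ of the three sutured manifolds obtained by Dehn-filling $K$ along $\mu$, $\lambda$, and $\mu+\lambda$. This can be established either by running the Ozsv{\'a}th--Szab{\'o} holomorphic triangle argument of \cite[Thm.~9.12]{MR2113020} directly in a sutured Heegaard multi-diagram compatible with $K$, or by embedding $(M,\gamma)$ inside a closed $3$--manifold and invoking Juh{\'a}sz's surface-decomposition formulas to reduce to the closed case.

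Granted this triangle, the formal iterated mapping cone construction of Theorem~\ref{thm:OzSz} builds a filtered complex whose associated spectral sequence converges to $SFH(\boldSigma(A\times I,\bL))$. The $E^1$ page is read off from the vertices of the cube, namely the sutured Floer homologies of the branched double covers of the complete resolutions of $D$. Each complete resolution is a disjoint union of trivial and essential circles in $A \times I$; a direct computation shows that $SFH$ of the branched double cover, endowed with the natural suture structure induced from $\partial A \times I$, recovers the Asaeda--Przytycki--Sikora annular Frobenius-algebra assignment, with trivial and essential components contributing differently to the vector space at each vertex. The main obstacle is matching the $E^1$ differential, defined by triangle counts in the sutured Heegaard multi-diagram, with the annular Khovanov differential: this is a multi-diagram calculation generalizing the one in \cite{MR2141852}, and requires careful bookkeeping of the boundary sutures at each edge and higher face of the cube.
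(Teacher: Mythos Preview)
Your outline is sound and would work, but the paper (citing Roberts and the authors' own \cite{AnnularLinks}) takes a shorter route that sidesteps much of what you flag as the hard part. Rather than developing the surgery triangle and cube-of-resolutions machinery anew in the sutured category, the paper observes that the annular/sutured complexes are literally the same underlying vector spaces as in the closed case: the chain complex for $Kh^*(\bL\subset A\times I)$ is the ordinary Khovanov complex for $\bL\subset S^3$ equipped with an extra $\Z$--grading, and the filtered complex for $SFH(\boldSigma(A\times I,\bL))$ is a two-pointed version of $\widehat{CF}(\boldSigma(S^3,\bL))$ equipped with an extra $\frac{1}{2}\Z$--grading; in both cases one simply discards the components of the differential that fail to preserve the new grading. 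Since the Ozsv\'ath--Szab\'o spectral sequence of Theorem~\ref{thm:OzSz} is already built on these complexes, the annular spectral sequence drops out once one checks that all maps in the cube (including the higher polygon counts) respect the extra grading.

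What your approach buys is a self-contained argument entirely within the sutured world, which is conceptually clean and generalizes more readily; what the paper's approach buys is that the ``main obstacle'' you identify---matching the $E^1$ differential via multi-diagram triangle counts---is reduced to the already-done computation in \cite{MR2141852}, plus a grading check. In particular, you do not need to separately establish a sutured surgery triangle or recompute $SFH$ of the resolved pieces from scratch.
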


Here, $Kh^*$ is a version of Khovanov homology for links in thickened surfaces defined by Asaeda-Przytycki-Sikora in \cite{MR2113902}, and $SFH$ is the version of Heegaard Floer homology for balanced, sutured manifolds defined by \Juhasz in \cite{MR2253454}.  The proof of Theorem \ref{thm:Annular} follows Ozsv{\'a}th-Szab{\'o}'s original proof closely.  In fact, one sees that the chain complex computing $Kh^*(\bL \subset A \times I)$ (resp., $SFH(\boldSigma(A \times I, \bL))$) is really just the original chain complex computing the ordinary Khovanov homology of the link $Kh(\bL \subset S^3)$ (resp., a two--pointed version of the ordinary Floer homology of the double--branched cover, $\widehat{HF}(\boldSigma(S^3, \bL)) \otimes \mathbb{F}^2$), where the chain complex has been equipped with an extra $\mathbb{Z}$-- (resp., $\frac{1}{2}\mathbb{Z}$--) grading, and those components of the differential that do not respect this extra grading are set to zero.

There is a nice correspondence, detailed in Theorem \ref{thm:Dictionary}, between the extra Heegaard-Floer $\frac{1}{2}\Z$--grading, which we denote by ${\bf A}_S$, and the extra Khovanov $\Z$--grading, which we denote by $k$.  Furthermore, both gradings have natural interpretations, in terms of the geometry of vector fields on the Heegaard-Floer side (Section \ref{sec:SFH}) and in terms of the representation theory of the quantum group $U_{q}(\mathfrak{sl}_2)$ on the Khovanov side (Section \ref{sec:Kh}).  This correspondence will be the focus of the remainder of the paper.

\section{Sutured Floer grading} \label{sec:SFH}
We begin by studying the $\frac{1}{2}\Z$--grading on the sutured Floer homology of $\boldSigma(A \times I, \bL)$, as described in \cite{AnnularLinks}, following \cite{GT07060741}.

In particular, we build a filtered chain complex whose homology is $SFH(\boldSigma(A \times I, \bL))$ from the data of a sutured Heegaard multi-diagram for $\boldSigma(A \times I, \bL)$ and endow each generator, {\bf x}, of the complex with a $\frac{1}{2}\Z$--grading, ${\bf A}_S({\bf x})$.  We furthermore describe how to understand this grading as an Euler number of a natural vector field associated to ${\bf x}$.

Recall that any sutured Floer chain complex $CFH(Y,\Gamma)$ for a balanced sutured manifold $(Y,\Gamma)$ splits according to Spin$^c$ structures.  These are homology classes of non-vanishing vector fields on $Y$ with prescribed boundary behavior, and they form an affine set for the action of $H_1(Y;\Z)$.  Given a generator {\bf x} of a sutured Floer complex $CFH(Y,\Gamma)$, one assigns a Spin$^c$ structure to it as described in \cite[Defn. 4.5]{MR2253454}.  See \cite{MR1484699}, \cite[Sec. 3.2]{GT0512286}, \cite[Sec. 4]{MR2253454}, \cite[Sec. 3]{MR2390347} for more details.

Given the data of a trivialization, $t$, of the restriction of a Spin$^c$ structure, $\spincs$, to $\partial Y$ one can then define the first Chern class, $c_1(\spincs,t) \in H^2(Y,\partial Y;\Z)$, with respect to the trivialization (\cite[Defn. 3.7]{MR2390347}) as a relative Euler class of $\spincs$.  The added data of a properly-imbedded surface $(S,\partial S) \subset (Y,\partial Y)$ then endows $CFH(Y,\Gamma)$ with a $\frac{1}{2}\Z$ {\em Alexander$_S$--grading} ({\bf A}$_S$--grading) as described in \cite[Sec. 3]{SurfDecomp}, following \cite{MR2253454}.

Namely, given ${\bf x} \in CFH(Y,\Gamma)$, we define:
\[{\bf A}_S({\bf x}) := \frac{1}{2}\langle c_1(\spincs({\bf x}),t),[S]\rangle.\]

Furthermore, suppose that $(S,\partial S) \subset (Y,\partial Y)$ is geometrically disjoint from a framed link $L \subset Y$.  Then it is proved in \cite[Sec. 3]{SurfDecomp} that the entire filtered complex for $SFH(Y)$ associated to a Heegaard multi-diagram for the pair $(Y,L)$ (as described in Section 2) can be endowed with {\bf A}$_S$--gradings.  Moreover \cite[Lem. 3.12]{SurfDecomp} implies that the filtered chain complex splits according to these gradings.

The case of interest to us is when $S \subset \boldSigma(A \times I, \bL)$ is the preimage of a meridional disk in the solid torus $A \times I$ under the covering map $\pi: \boldSigma(A \times I, \bL) \rightarrow A \times I$.  In order to study these {\bf A}$_S$--gradings most easily, we construct a particularly convenient Heegaard multi-diagram which yields a filtered chain complex computing $SFH(\boldSigma(A \times I, \bL))$:

\begin{construction}\label{const:Heegmultidiag}
Identifying \[A \times I :=  \{(r,\theta,z)\,\,|\,\, r \in [1,2], \theta \in (-\pi,\pi], z \in [0,1]\} \subset \R^2 \times \R,\] let $\gamma_\phi \subset A$ denote the arc $\{(r,\theta) \in \R^2\,\,|\,\,\theta = \phi, r \in [1,2]\}$ for fixed $\phi \in (-\pi,\pi]$, $\gamma_{[\phi_1, \phi_2]} \subseteq A$ denote the closed region $\{(r,\theta) \in \R^2\,\,|\,\,\theta \in [\phi_1,\phi_2], r \in [1,2]\}$, and $w(\bL)$ denote the minimal geometric intersection number, $|\bL \pitchfork (\gamma_0 \times I)|$, among all elements in the isotopy class of $\bL$.

Choose an isotopy class representative of $\bL$ satisfying:
\begin{itemize}
	\item $\mathcal{P}(\bL)$ is an admissible projection (diagram) of $\bL$ to the annulus in the sense of \cite[Defn. 2.10]{AnnularLinks}, with small neighborhoods of the crossings $c_1, \ldots, c_n$ labeled $N(c_1), \ldots, N(c_n)$,
	\item the intersection of $\mathcal{P}(\bL)$ with $\gamma_{\left[-\frac{\pi}{4}, \frac{\pi}{4}\right]}$ in $A$ is ``standard" (see Figure \ref{fig:Standard}),
	\item the complete list, $\vec{p}$, of intersection points of $\bL$ with the half-level annulus $A \times \left\{\frac{1}{2}\right\}$ consists of
		\begin{itemize}
			\item the $w(\bL)$ points along $\gamma_0 \times \left\{\frac{1}{2}\right\}$,
			\item the $w(\bL)$ points along $\gamma_{\frac{\pi}{4}} \times \left\{\frac{1}{2}\right\}$, and
			\item the four points $\left(\mathcal{P}(\bL) \cap \partial(N(c_i))\right) \times \left\{\frac{1}{2}\right\}$ around each crossing for each $i \in 1, \ldots, n$,
		\end{itemize}
	\item $\bL$ intersects $A \times \left[\frac{1}{2}, 1\right]$ in (and only in) the regions $\gamma_{\left[0,\frac{\pi}{4}\right]} \times \left[\frac{1}{2},1\right]$ and $N(c_i) \times \left[\frac{1}{2},1\right]$.
\end{itemize}

\begin{figure}
\begin{center}
\resizebox{2in}{!}{\input{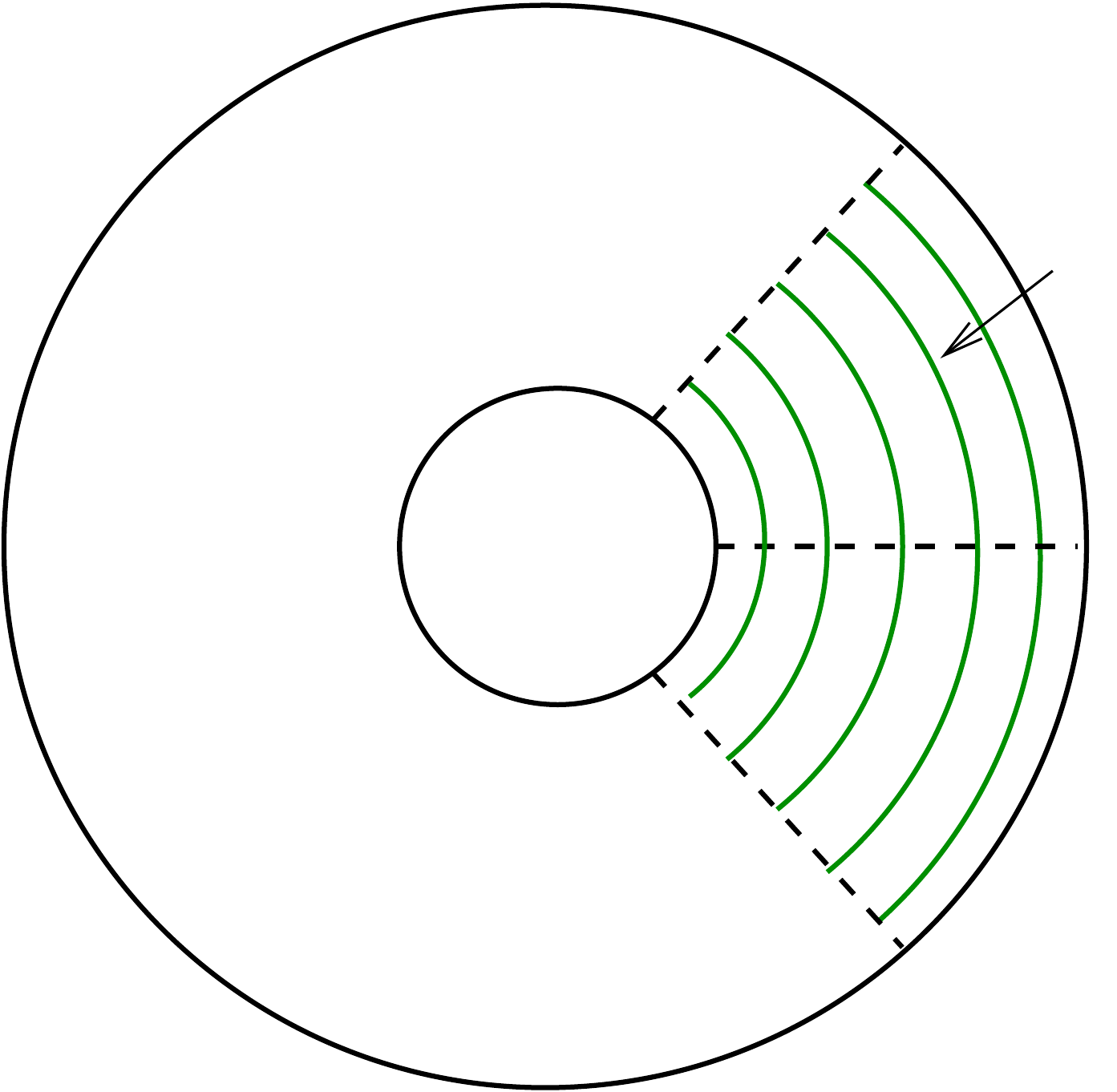_t}}
\end{center}
\caption{A ``standard" projection, $\cP(\bL)$,  of $\bL$ to $\gamma_{\left[-\frac{\pi}{4},\frac{\pi}{4}\right]}$}
\label{fig:Standard}
\end{figure}

Note that if $\bL$ is as above, then every complete resolution of the corresponding projection, $\mathcal{P}(\bL)$, represents a resolved link that is a union of simple ``cups" and ``caps" as in \cite[Sec. 5.3]{MR2601010}.  We can therefore use the argument given in
 \cite[Proof of Prop. 2.8]{AnnularLinks} to construct a (not-yet-admissible) sutured Heegaard multi-diagram from which we can build the filtered chain complex computing $SFH(\boldSigma(A \times I, \bL))$.

More precisely, recall that the set $\{0,1\}^n$ indexes all possible complete resolutions of $\cP(\bL)$ (see Section \ref{sec:background}).  Let $\vec{p} \subset \left(A \times \left\{\frac{1}{2}\right\}\right)$ denote the set of intersection points (described above) of $\bL$ with $A \times \left\{\frac{1}{2}\right\}$.  Then the sutured Heegaard multi-diagram is given by $\left(\Sigma,\boldalpha,\boldbeta_{\{0,1\}^n}\right)$, where $\boldbeta_{\{0,1\}^n}$ represents the list of elements of the set $\{\boldbeta_{\cI}\,|\, \cI \in \{0,1\}^n\}$ and
\begin{itemize}
	\item $\Sigma := \boldSigma\left(A \times \{\frac{1}{2}\}, \vec{p}\right)$ (with covering projection $\pi: \boldSigma\left(A \times \{\frac{1}{2}\}, \vec{p}\right) \rightarrow A$),
	\item $\boldalpha := \pi^{-1}\left(\mathcal{P}(\mbox{cups})\right)$, and
	\item $\boldbeta_\cI := \pi^{-1}\left(\mathcal{P}(\mbox{caps}_{\cI})\right)$.
\end{itemize}

See Figure \ref{fig:Example} for an example.
\end{construction}

\begin{figure}
\begin{center}
\resizebox{5.5in}{!}{\input{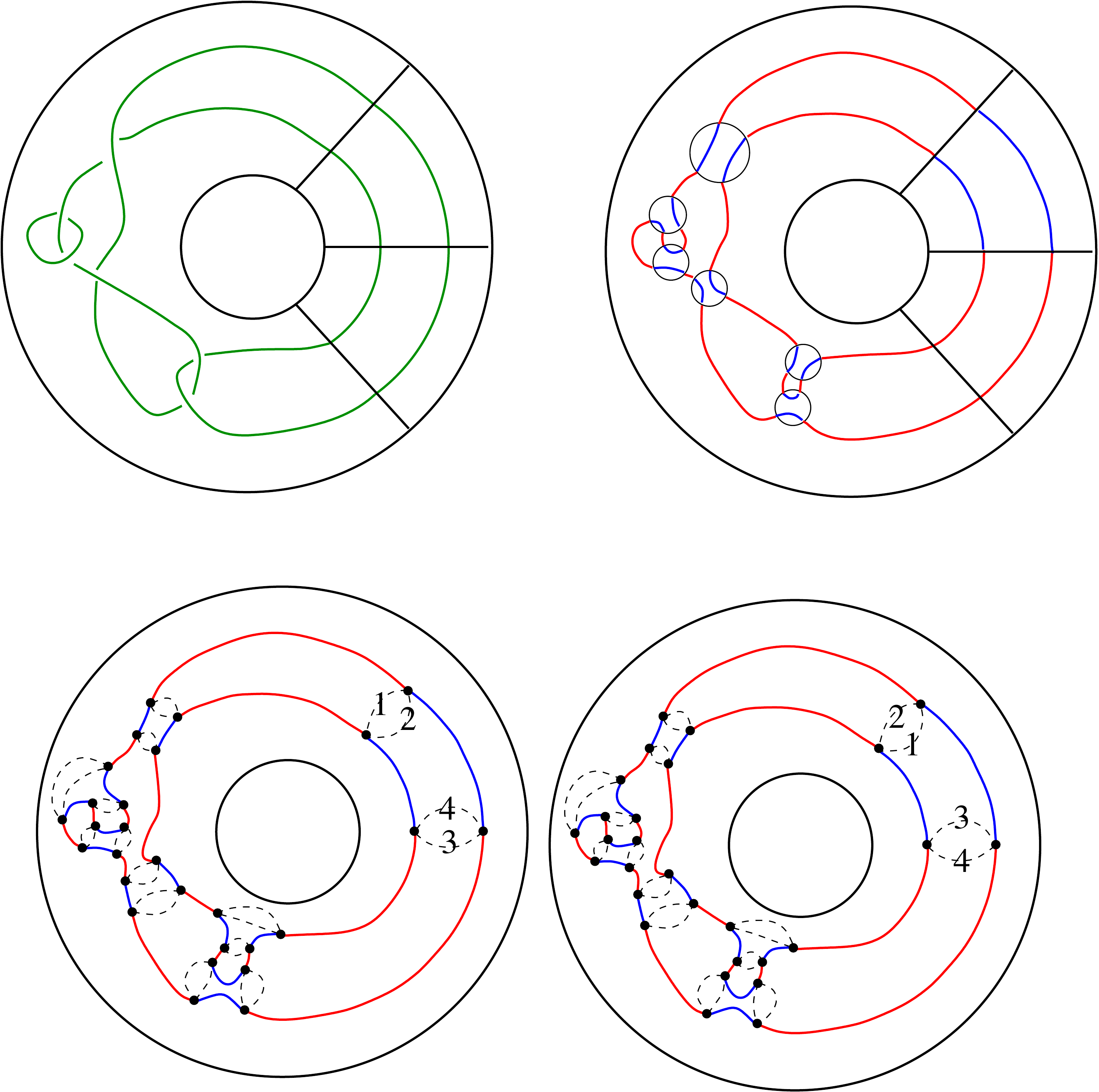_t}}
\end{center}
\caption{The figure at top left is an admissible projection (diagram) of a link $\bL$ in the special form described in Construction \ref{const:Heegmultidiag}.  The figure at top right shows a diagram of a particular resolution, $\bL_{\cI}$, of $\bL$, with the regions corresponding to portions of $\bL$ in the region $A \times \left[\frac{1}{2},1\right]$ (the ``caps") marked in blue, the region $A \times \left[0,\frac{1}{2}\right]$ (the ``cups") marked in red, and the neighborhoods of the crossings indicated by circles.  The pair of figures in the second row above represent the two sheets of the double-branched cover of the resolution at top right, branched along the points of intersection of the resolved link with $A \times \left\{\frac{1}{2}\right\}$ (denoted by the bold black dots).  This figure will therefore yield a Heegaard diagram for $\boldSigma(A \times I, \bL_{\cI})$.  The dotted lines are branch cuts, which are identified on the two sheets in the standard way, as indicated by the numbers assigned to the pair of branch cuts on the right.  The red curves are the $\alpha$ circles, and the blue curves are the $\beta$ circles.}
\label{fig:Example}
\end{figure}

Let $\bL_{\cI}$ denote the complete resolution of $\bL$ associated to the $n$--tuple $\cI \in \{0,1\}^n$.  The construction of the sutured multi-diagram $\left(\Sigma, \boldalpha, \boldbeta_{\{0,1\}^n}\right)$ for $\boldSigma(A \times I, \bL)$ given in Construction \ref{const:Heegmultidiag} is tailored to have nice properties with respect to $\{S_{\cI}\}$, a particular collection of properly imbedded surfaces in the collection of sutured manifolds, $\{\boldSigma(A \times I, \bL_{\cI})\}$.  To explain what we mean, we must first recall a couple of definitions and facts.

Recall that if $F \subseteq (Y,\Gamma)$ is a {\em decomposing surface} \cite[Defn. 3.1]{MR723813} in a balanced, sutured manifold satisfying some additional mild hypotheses, then \Juhasz proves, in \cite[Prop. 4.4]{MR2390347}, that one can construct a balanced sutured Heegaard diagram $\Sigma$ {\em adapted} to $F$ (\cite[Defn. 4.3]{MR2390347}).  The key feature of such a Heegaard diagram is that ``most of" $F$ sits as a subsurface of $\Sigma$ with boundary a graph having certain important properties with respect to the curves $\boldalpha,\boldbeta \subset \Sigma$.  See Figure \ref{fig:SurfDiag} for an illustration, and \cite[Sec. 4]{MR2390347} for more details.

\begin{figure}
\begin{center}
\resizebox{3in}{!}{\input{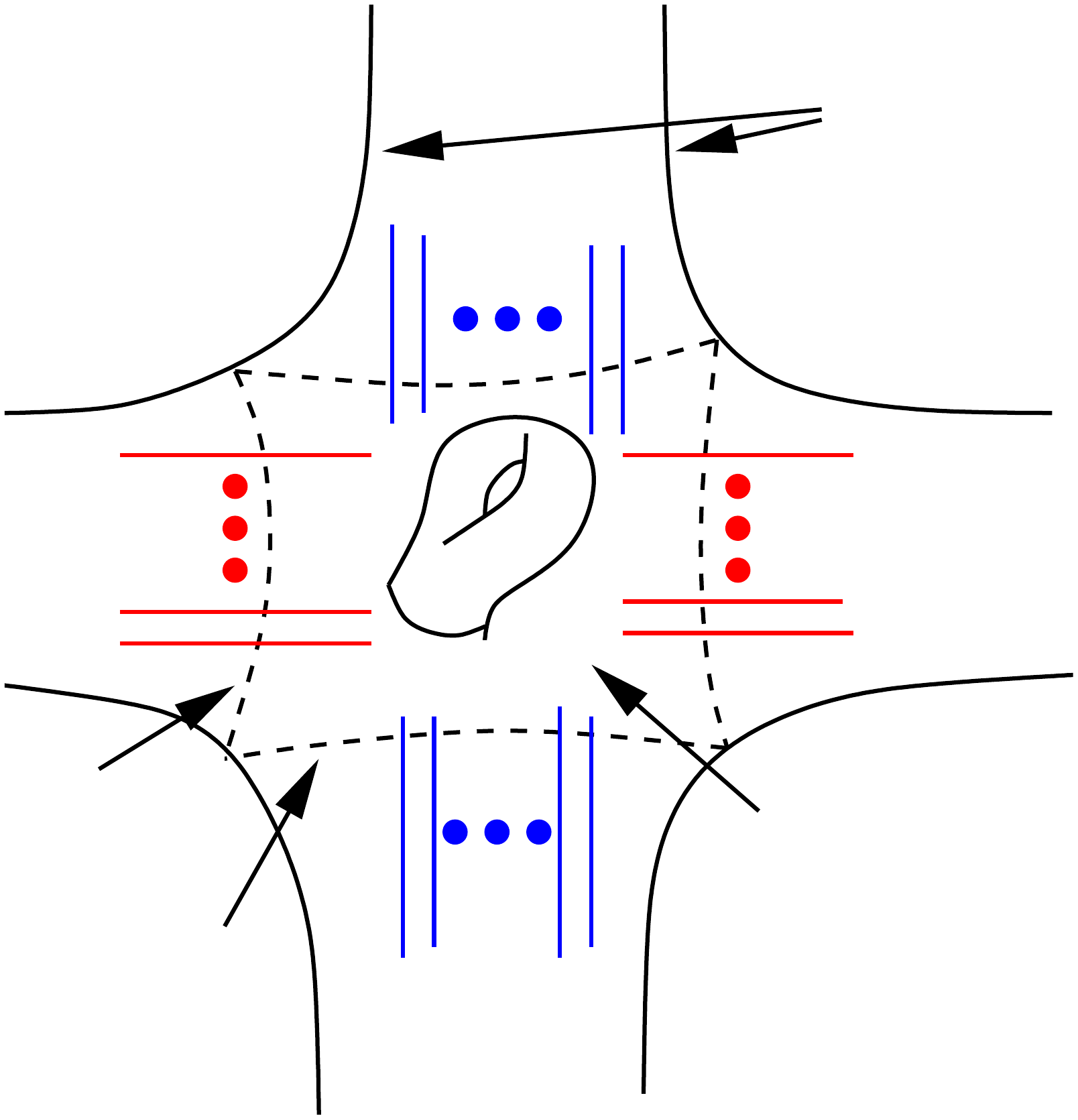_t}}
\end{center}
\caption{An illustration of a Heegaard diagram, $\Sigma$, adapted to a decomposing surface, $F$.  The diagram $\Sigma$ contains a distinguished subsurface, $P$, whose boundary is a disjoint union of finitely many cyclic graphs with the property that all vertices lie on $\partial\Sigma$ and the edges alternate between edges $\{e_A\}$ of type ``A" and edges $\{e_B\}$ of type ``B."  The ``A" edges have trivial intersection with $\boldbeta$ and the ``B" edges have trivial intersection with $\boldalpha$.  The decomposing surface $F$ is then the surface (with corners) given by \[F = P \bigcup \left(\{e_A\} \times \left[\frac{1}{2}, 1\right]\right) \bigcup \left(\{e_B\} \times \left[0, \frac{1}{2}\right]\right).\]}
\label{fig:SurfDiag}
\end{figure}

In \cite[Prop. 4.11]{SurfDecomp}, we generalized this to the case where $F \subseteq (Y,\Gamma)$ is a decomposing surface (again, satisfying some additional mild hypotheses) that is geometrically disjoint from a framed link $L \subset (Y, \Gamma)$.  Letting $(Y_{\cI},\Gamma_{\cI})$ denote the sutured manifold obtained by performing $\cI$--surgery along the link, we then see (\cite[Defn. 3.10]{SurfDecomp}) that $F$ induces a compatible collection $\{F_{\cI} \subset Y_{\cI}\}$ of decomposing surfaces in the various surgeries on $L$.  We can analogously construct (\cite[Prop. 4.11]{SurfDecomp}) a sutured multi-diagram adapted to the collection $\{F_{\cI} \subset Y_{\cI}\}$  in the sense of \cite[Defn. 4.6]{SurfDecomp}.

In our situation, let $D \subset (A \times \left\{\frac{1}{2}\right\})$ be the bigon (quasi-polygon), pictured in Figure \ref{fig:Bigon}, with boundary graph given by the two labeled edges $e_A, e_B$.  Note that, by the way we chose $\bL$ in Construction \ref{const:Heegmultidiag}, $e_A$ only intersects the portion of $\cP(\bL)$ projecting from $\bL \cap (A \times \left[0,\frac{1}{2}\right])$ and $e_B$ only intersects the portion of $\cP(\bL)$ projecting from $\bL \cap (A \times \left[\frac{1}{2},1\right])$.  We now define the following piecewise smooth imbedded surface:
\[D' := D \cup \left(e_A \times \left[\frac{1}{2},1\right]\right) \cup \left(e_B \times \left[0,\frac{1}{2}\right]\right) \subset A \times I.\]

\begin{figure}
\begin{center}
\resizebox{3in}{!}{\input{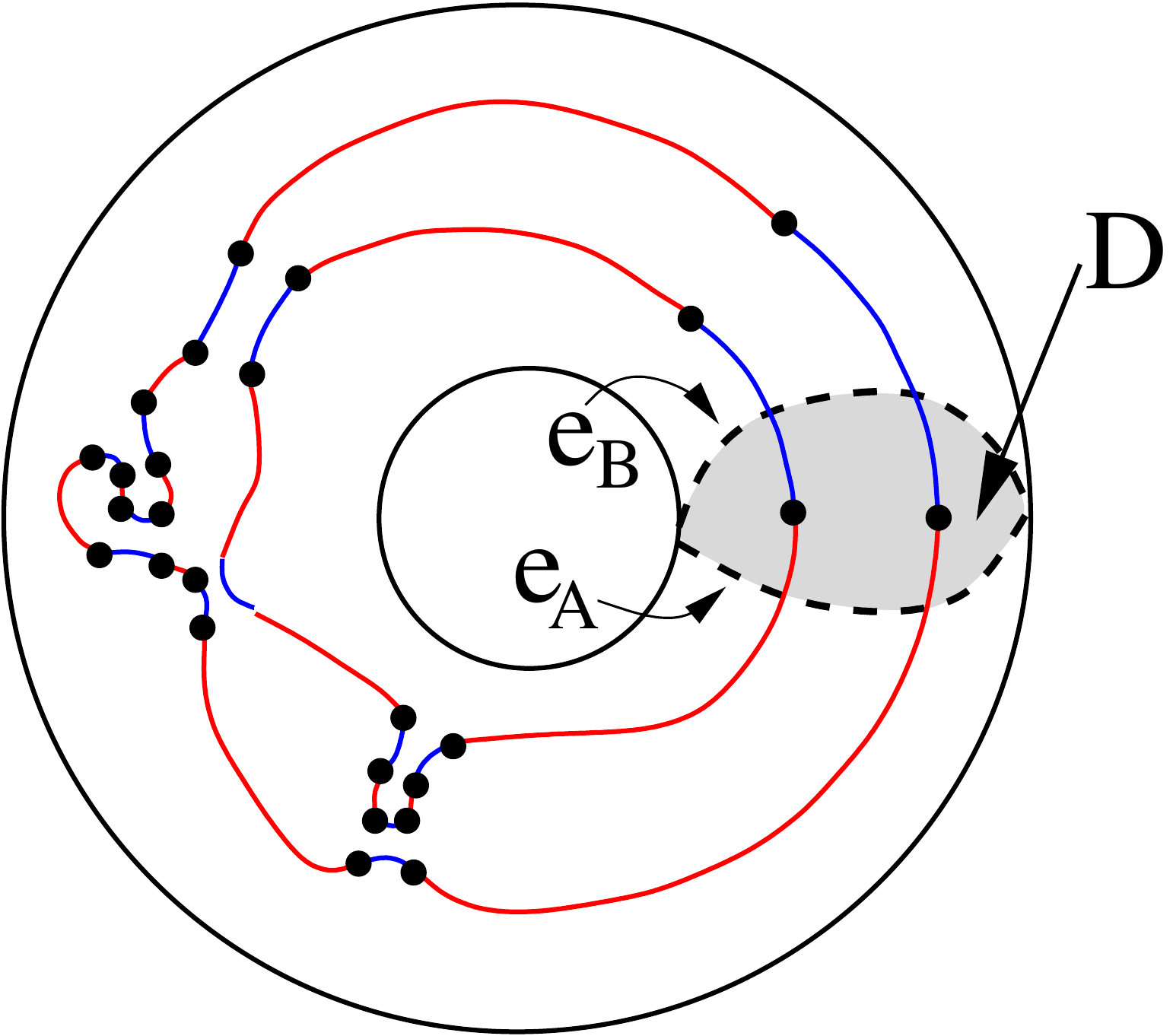_t}}
\end{center}
\caption{An illustration of the bigon $D \subset (A \times \left\{\frac{1}{2}\right\})$ that gives rise to the (compatible collection of) decomposing surface(s) in a Heegaard multi-diagram for $\boldSigma(A \times I, \bL)$.  As before, the black dots represent intersection points of the (resolved) link with $A \times \left\{\frac{1}{2}\right\}$, and the red (resp., blue) arcs are the projections of the portions of the resolution in the bottom (resp., top) half of $A \times I$.}
\label{fig:Bigon}
\end{figure}

Letting
\begin{itemize}
	\item $\vec{q}$ denote the collection of intersection points $\vec{q} = (\bL \cap D)$ of the special isotopy class representative $\bL$ with $D \subset A \times \left\{\frac{1}{2}\right\}$ (which, by construction, occur along $\gamma_0 \times \left\{\frac{1}{2}\right\}$),
	\item and $\widetilde{e}_A, \widetilde{e}_B$ denote the lifts of $e_A, e_B$ to $\boldSigma(D, \vec{p})$,
\end{itemize}

we see that $D'$ lifts, in the double-branched cover $\boldSigma(A \times I, \bL)$, to a piecewise smooth surface, \[S' := \boldSigma(D, \vec{q}) \cup \left(\widetilde{e}_A \times \left[\frac{1}{2},1\right]\right) \cup \left(\widetilde{e}_B \times \left[0,\frac{1}{2}\right]\right).\]

In addition, the framed link $L_{\bL} \subset \boldSigma(A \times I, \bL)$ obtained as the lift of simple arcs associated to each of the crossings of $\cP(\bL)$ (as described in Section \ref{sec:background}) is geometrically disjoint from $S'$.  Hence we have a compatible collection $\{S'_{\cI} \subset \boldSigma(A \times I, \bL_{\cI})\}$ of surfaces associated to each choice $\cI \in \{0,1\}^n$ of surgery on $L_\bL$.  Furthermore, each $S'_{\cI}$ is isotopic to the preimage, in $\boldSigma(A \times I, \bL_{\cI})$, of the meridional disk $\gamma_0 \times I \subset A \times I$.  We see immediately:

\begin{lemma} Let $\bL \subset A \times I$ be a link.  A Heegaard multi-diagram $\left(\Sigma, \boldalpha, \boldbeta_{\{0,1\}^n}\right)$ associated to $\boldSigma(A \times I, \bL)$ via Construction \ref{const:Heegmultidiag} with the additional data of the subsurface $P= \boldSigma(D,\vec{q}) \subseteq \Sigma$ described above is a surface multi-diagram adapted to the collection $S'_{\cI} \subset \boldSigma(A \times I, \bL_{\cI})$.
\end{lemma}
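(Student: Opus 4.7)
The strategy is to unpack the definition of an adapted surface multi-diagram from \cite[Defn. 4.6]{SurfDecomp} and verify each condition directly from Construction \ref{const:Heegmultidiag}, exploiting the ``product'' structure we have imposed on $\bL$ above and below the slice $A \times \{\tfrac{1}{2}\}$. First I would observe that, since $\vec{q} = \bL \cap D \subseteq \vec{p}$ by construction, the branched cover $P := \boldSigma(D,\vec{q})$ sits naturally as a subsurface of $\Sigma = \boldSigma(A \times \{\tfrac{1}{2}\}, \vec{p})$, and its boundary is exactly the preimage $\pi^{-1}(\partial D) = \widetilde{e}_A \cup \widetilde{e}_B$, which is a cyclic graph whose vertices lie on $\partial \Sigma$. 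This identifies $P$ as a candidate distinguished subsurface in the sense of Figure \ref{fig:SurfDiag}.

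Next I would verify the alternating ``A''/``B'' intersection property with the $\alpha$-- and $\beta$--curves. Recall that $\boldalpha = \pi^{-1}(\cP(\mathrm{cups}))$ projects to the cups, i.e.\ the projection of $\bL \cap (A \times [0,\tfrac{1}{2}])$, while each $\boldbeta_\cI = \pi^{-1}(\cP(\mathrm{caps}_\cI))$ projects to the caps, i.e.\ the projection of $\bL_\cI \cap (A \times [\tfrac{1}{2},1])$. By the conditions bulleted in Construction \ref{const:Heegmultidiag}, the arc $e_A$ meets only cup-projections and the arc $e_B$ meets only cap-projections, so after lifting, $\widetilde{e}_A$ is disjoint from every $\boldbeta_\cI$ and $\widetilde{e}_B$ is disjoint from $\boldalpha$. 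This is precisely the alternating condition in the definition of a diagram adapted to a decomposing surface.

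Third, I would reconstruct $S'_\cI$ from the data $(P,\widetilde{e}_A,\widetilde{e}_B)$ via the prescription in Figure \ref{fig:SurfDiag}. By the definition of $S'$ given just before the lemma,
\[
S'_\cI \;=\; \boldSigma(D,\vec{q}) \,\cup\, \bigl(\widetilde{e}_A \times [\tfrac{1}{2},1]\bigr) \,\cup\, \bigl(\widetilde{e}_B \times [0,\tfrac{1}{2}]\bigr) \;=\; P \,\cup\, (\widetilde{e}_A \times [\tfrac{1}{2},1]) \,\cup\, (\widetilde{e}_B \times [0,\tfrac{1}{2}]),
\]
which is the exact formula from \cite[Prop. 4.4]{MR2390347} that $F$ must satisfy when glued out of an adapted Heegaard diagram. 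The isotopy (mentioned just before the lemma) between $S'_\cI$ and the preimage of the meridional disk $\gamma_0 \times I$ confirms that this reconstruction represents the correct decomposing surface inside $\boldSigma(A\times I,\bL_\cI)$.

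Finally, I would confirm the ``multi-diagram'' compatibility conditions of \cite[Defn. 4.6]{SurfDecomp}: the subsurface $P$ and the edges $\widetilde{e}_A,\widetilde{e}_B$ do not depend on $\cI$, and the framed link $L_\bL$ (whose Dehn surgeries produce the cube of resolutions) is geometrically disjoint from $S'$, as noted in the paragraph preceding the lemma. Hence the single pair $(P,\partial P) \subseteq (\Sigma,\partial \Sigma)$ serves simultaneously for every $\cI \in \{0,1\}^n$, which is exactly the content of being \emph{adapted to the collection} $\{S'_\cI\}$. The only mild technical point is checking that the choices made in Construction \ref{const:Heegmultidiag} can be arranged so that the hypotheses of \cite[Prop. 4.11]{SurfDecomp} hold; but the ``standard'' form of $\bL$ on $\gamma_{[-\pi/4,\pi/4]}$ was designed precisely to guarantee this, so this is the only step that requires care and is the main (small) obstacle in the argument.
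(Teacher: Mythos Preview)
Your proposal is correct and follows essentially the same approach as the paper: both arguments amount to verifying the definition in \cite[Defn.~4.6]{SurfDecomp} by checking that $\widetilde{e}_A \cap \boldbeta_\cI = \emptyset$ and $\widetilde{e}_B \cap \boldalpha = \emptyset$, which follows because downstairs $e_A$ meets only cup-projections and $e_B$ meets only cap-projections. The one point the paper makes explicitly that you fold into your final ``mild technical point'' is that the isotopies required to achieve admissibility of the multi-diagram can be chosen with support away from $P = \boldSigma(D,\vec{q})$; you may want to state this directly rather than leave it implicit.
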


\begin{proof}  This is essentially by definition; see \cite[Defn. 4.6]{SurfDecomp}.  Note that the boundary of $\boldSigma(D,\vec{q})$ decomposes as a union of edges $\widetilde{e}_A$ and $\widetilde{e}_B$.  Since (downstairs in $D$) $e_A$ only intersects the portions of the projection $\cP(\bL)$ coming from $\bL \cap (A \times \left[0, \frac{1}{2}\right])$ and $e_B$ only intersects the portions of $\cP(\bL)$ coming from $\bL \cap (A \times \left[\frac{1}{2},1\right])$, the lifts, $\widetilde{e}_A, \widetilde{e}_B$ will satisfy the necessary condition
\[ \widetilde{e}_A \cap \boldbeta_\cI = \emptyset, \,\, \widetilde{e}_B \cap \boldalpha = \emptyset\] for all $\cI \in \{0,1\}^n$.

Any isotopies needed to make $\left(\Sigma, \boldalpha, \boldbeta_{\{0,1\}^n}\right)$ admissible (see \cite[Lem. 3.13]{MR2601010}) can be chosen with support away from $\boldSigma(D, \vec{q})$.
\end{proof}

Since the Heegaard multi-diagram $\left(\Sigma, \boldalpha, \boldbeta_{\{0,1\}^n}\right)$ is adapted to the collection $S_\cI'$, it is easy to compute the $\frac{1}{2}\Z$-- Alexander grading associated to any generator of the filtered chain complex for an appropriate trivialization, $t$.  Namely, a typo-free version of the formula given in the proof of \cite[Thm. 2.20]{GT08110178} (see also \cite{MR2065507, MR2390347}), adjusted to our situation (our decomposing surface intersects the sutures in $4$ points, not $2$) tells us that, for some trivialization, $t$, \[{\bf A}_{S}({\bf x}) = \frac{1}{2}\left(\chi(S) - 2 + 2\cdot\#\{x_i \in {\bf x}\,|\, x_i \in P\}\right).\]

Noting that $\chi(S)= \chi(\boldSigma(D,\vec{q})) = 2 - |\vec{q}|$ by the Riemann-Hurwitz formula, we obtain:
\[{\bf A}_{S}({\bf x}) = -\frac{1}{2}(|\vec{q}|) + \#\{x_i \in {\bf x}\,|\, x_i \in P\}.\]

Since $\#\{x_i \in {\bf x}\,|\, x_i \in P\} \in \{0, \ldots, |\vec{q}|\}$, we see that the Alexander gradings of generators range from $-\frac{1}{2}(|\vec{q}|)$ (when there are no intersection points in $P$) to $\frac{1}{2}(|\vec{q}|)$ (when there are the maximal number of intersection points in $P$).

In summary, the $\frac{1}{2}\Z$ grading, ${\bf A}_{S}({\bf x})$, assigned to a generator, {\bf x}, is the Euler number of a vector field on $\boldSigma(A \times I, \bL_{\cI})$ naturally defined using the data of {\bf x}.  Furthermore, this Euler number can be computed explicitly by counting the number of intersection points of {\bf x} that lie in the region, $P \subset \Sigma$, corresponding to the family of decomposing surfaces, $\{S_\cI\}$.

\section{Khovanov grading} \label{sec:Kh}
\subsection{Khovanov homology for links in thickened annuli}
As before, let $\mathbb{L} \subset A \times I \subset  S^3$ be a link in the product sutured manifold $A \times I$, where $A \times I$ has been imbedded in $S^3$ via the imbedding described before Theorem \ref{thm:Annular}, and let $\mathcal{P}(\mathbb{L})\subset A \subset \mathbb{R}^2$ be a diagram for $\mathbb{L}$ with $n$ crossings.  We similarly study a $\Z$--grading on the Khovanov homology of $\bL$, as described in \cite{MR2113902} (see also \cite{GT07060741}, \cite{AnnularLinks}).

If we forget about the imbedding $\bL \subset A \times I$ and view $\bL$ as a link in $S^3$, we can define its ordinary (nonreduced) Khovanov homology in a manner very similar to the definition of the reduced Khovanov homology summarized during the proof of Theorem \ref{thm:OzSz}.  In particular, the Khovanov homology of $\mathbb{L}$ is defined by associating to each complete resolution $\mathbb{L}_{\mathcal{I}}$ a graded vector space $V^{\otimes c}$, where $c$ is the number of components in $\mathbb{L}_{\mathcal{I}}$, and $V$ is a two-dimensional graded vector space spanned by two homogeneous elements $v_-$ and $v_+$ of degrees $-1$ and $+1$, respectively. In fact, it is understood implicitly that we have chosen a numbering of the components of $\mathbb{L}_{\mathcal{I}}$ from $1$ to $c$, so that the $i$th tensor factor of $V^{\otimes c}$ corresponds naturally to the $i$th component of $\mathbb{L}_{\mathcal{I}}$.

Let $\mathbb{L}_{\mathcal{I}}$ be a complete resolution of $\mathbb{L}$ with components $K_1,\ldots,K_c$ as above.  If we now remember the extra data of the imbedding $\mathbb{L} \subset A \times I \subset S^3$ (correspondingly, $\mathcal{P}(\mathbb{L}) \subset A \subset \R^2$), we can endow the vector space $V^{\otimes c}$ associated to $\mathbb{L}_{\mathcal{I}}$ with an additional grading, called the $k-$grading. Namely, we define the $k-$degree of a basis vector $v_{\epsilon_1}\otimes\ldots\otimes v_{\epsilon_c}\in V^{\otimes c}$, for $\epsilon_i\in\{+,-\}$, by $k(v_{\epsilon_1}\otimes\ldots\otimes v_{\epsilon_m}):=k_1(v_{\epsilon_1})+\ldots+k_m(v_{\epsilon_m})$, where
$$
k_i(v_{\pm}):=\begin{cases}
0&\text{if $[\mathcal{P}(K_i)]=0\in H_1(A;\mathbb{Z})$,}\\
\pm 1&\text{if $[\mathcal{P}(K_i)]\neq 0\in H_1(A;\mathbb{Z})$.}
\end{cases}
$$

It turns out that the differential in Khovanov's chain complex does not preserve the $k-$grading; however, it is non-increasing in the $k$--grading, which induces a filtration on Khovanov's chain complex.  By forming the associated graded chain complex and taking its homology, one obtains an abelian group $Kh^*(\mathbb{L})$.   This group is (up to an overall grading shift) the version of Khovanov homology for links in $A\times I$ that was introduced by Asaeda--Przytycki--Sikora \cite{MR2113902}. $Kh^*(\mathbb{L})$ is equipped with three gradings: two gradings coming from the ordinary bigrading on Khovanov's chain complex (in \cite{MR1740682}, these gradings were denoted by $i$ and $j$), and the $k-$grading.

The graded Euler characteristic of $Kh^*(\mathbb{L})$ is the Laurent polynomial
$$
\chi_{q,t}(Kh^*(\mathbb{L})):=\sum_{i,j,k}(-1)^iq^jt^k \mbox{dim}_{\mathbb{F}}(Kh^*(\mathbb{L})^{i,j,k})\,\in\,\mathbb{Z}[q^{\pm 1},t^{\pm 1}]\,,
$$
where $Kh^*(\mathbb{L})^{i,j,k}$ denotes the homogeneous component of $Kh^*(\mathbb{L})$ for the degrees $(i,j,k)$. We will henceforth abbreviate $SJ(\mathbb{L}):=\chi_{q,t}(Kh^*(\mathbb{L}))$. Note that $SJ(\mathbb{L})$ specializes to the Jones polynomial $J(\mathbb{L})$ if one sets $t$ equal to $1$. If $\mathbb{L}_{\mathcal{I}}$ is a complete resolution with $c$ components, then $SJ(\mathbb{L}_{\mathcal{I}})$ is given by
\begin{equation}\label{eqn:sj}
SJ(\mathbb{L}_{\mathcal{I}}):=\chi_{q,t}(V^{\otimes c})=
(q+q^{-1})^{u}\, z^{c-u}\,,
\end{equation}
where $z:=qt+(qt)^{-1}$, and $u$ is the number of components of $\mathbb{L}_{\mathcal{I}}$ that are nullhomologous in $A\times I$. In particular, since $SJ(\mathbb{L})$ is a linear combination of the $SJ(\mathbb{L}_{\mathcal{I}})$ with coefficients in $\mathbb{Z}[q^{\pm 1}]^{\times}$, this shows that $SJ(\mathbb{L})$ is actually contained in the subring $\mathbb{Z}[q^{\pm 1}][z]\subset\mathbb{Z}[q^{\pm 1},t^{\pm 1}]$. In Subsection \ref{sec:Skein}, we will discuss how this subring is related to the Kauffman bracket skein module of $A\times I$.

\subsection{Enhanced resolutions} \label{sec:kgrading}
Let $\mathbb{L}$ be a link in $A\times I$ with diagram $\cP(\bL)$, as before.  An enhanced resolution of $\cP(\mathbb{L})$ is an oriented $1-$manifold, $\mathbb{S}\subset A$, whose underlying unoriented $1-$manifold coincides with the projection, $\cP(\bL_{\cI})$, of a complete resolution of $\mathbb{L}$. (In \cite{viro-2002}, the term ``enhanced Kauffman state'' was used with a similar meaning).

To obtain a more geometric interpretation of the $k-$grading, we will now reinterpret the generators of $Kh^*(\mathbb{L})$ in terms of enhanced resolutions. Namely, if
$V^{\otimes c}$ denotes the vector space associated to a complete resolution $\mathbb{L}_{\mathcal{I}}$, then we will identify the basis vector $v_{\epsilon_1}\otimes\ldots\otimes v_{\epsilon_c}\in V^{\otimes c}$ ($\epsilon_i\in\{+,-\}$) with the enhanced resolution $\mathbb{S}\subset A$ which is obtained by orienting the $i$th component of $\mathcal{P}(\mathbb{L}_{\mathcal{I}})\subset A\subset\mathbb{R}^2$ clockwise if $\epsilon_i=-$, and counterclockwise if $\epsilon_i=+$.

%To obtain a more geometric interpretation of the $k-$grading, we reinterpret the basis vectors $v_{\epsilon_1}\otimes\ldots\otimes v_{\epsilon_c}\in V^{\otimes c}$ as choices of orientations for the $1$-manifold $\mathcal{P}(\mathbb{L}_{\mathcal{I}})\subset A\subset\mathbb{R}^2$. Specifically, we associate to the basis vector $v_{\epsilon_1}\otimes\ldots\otimes v_{\epsilon_c}$ the orientation of $\mathcal{P}(\mathbb{L}_{\mathcal{I}})$ which is obtained by orienting the $i$th component of $\mathcal{P}(\mathbb{L}_{\mathcal{I}})\subset\mathbb{R}^2$ clockwise if $\epsilon_i=-$, and counterclockwise if $\epsilon_i=+$.

%Thus, the basis vectors of the vector spaces $V^{\otimes c}$ can be identified with enhanced resolutions of $\mathbb{L}$, where by an enhanced resolution we mean a an oriented $1-$manifold $\mathbb{S}\subset A$, whose underlying unoriented $1-$manifold coincides with the projection of a complete resolution $\mathbb{L}_{\mathcal{I}}$.

Using this correspondence between basis vectors and enhanced resolutions, we can now express the $k-$grading as an algebraic intersection number
$$
k(v_{\epsilon_1}\otimes\ldots\otimes v_{\epsilon_c})=[\gamma_{0}]\cdot [\mathbb{S}]\,,
$$
where $\mathbb{S}$ is the enhanced resolution corresponding to the basis vector $v_{\epsilon_1}\otimes\ldots\otimes v_{\epsilon_c}$, and $\gamma_{0}\subset A$ denotes the arc introduced in Construction \ref{const:Heegmultidiag}, oriented outward (in the direction of increasing $r$).  Moreover, Formula \eqref{eqn:sj} can be rewritten as
\begin{equation}\label{eqn:sjstate}
SJ(\mathbb{L}_{\mathcal{I}})=\sum_{\mathbb{S}\in E(\mathbb{L}_{\mathcal{I}})}q^{j(\mathbb{S})}t^{k(\mathbb{S})}\,,
\end{equation}
where $k(\mathbb{S}):=[\gamma_{0}]\cdot [\mathbb{S}]$, and $j(\mathbb{S})$ denotes the difference between the number of counterclockwise and clockwise circles in $\mathbb{S}$, and $E(\mathbb{L}_{\mathcal{I}})$ denotes the set of all enhanced resolutions of $\mathbb{L}$ whose underlying unoriented $1-$manifold is equal to $\mathcal{P}(\mathbb{L}_{\mathcal{I}})$. By summing over all complete resolutions $\mathbb{L}_{\mathcal{I}}$, one then obtains a formula expressing $SJ(\mathbb{L})$ as a sum over all enhanced resolutions of $\mathbb{L}$.

\subsection{Reshetikhin-Turaev invariant}  As mentioned in the introduction, the integer $k-$grading defined above has a natural representation-theoretic interpretation in terms of weight space decompositions for the Reshetikhin-Turaev invariant.

%For simplicity, we will henceforth assume that $\mathbb{L}\subset A\times I$ is a resolved link, i.e. that $\mathcal{P}(\mathbb{L})$ has no crossings. Further,
Again, let $\bL \subset A \times I \subset S^3$ be a link and $\cP(\bL)$ a diagram of $\bL$.  In the following, we will assume that the intersection of $\cP(\bL)$ with $\gamma_{\left[-\frac{\pi}{4},\frac{\pi}{4}\right]} \subset A$ is ``standard'', in the sense of Construction \ref{const:Heegmultidiag}.

Let $\mathbb{T}'$ be the tangle in $D^2 \times I$ obtained by removing the neighborhood of a meridional disk in the solid torus $A \times I$:
$$
\mathbb{T}':=\mathbb{L}\setminus \left(\gamma_{\left(-\frac{\pi}{4},\frac{\pi}{4}\right)}\times [0,1]\right)\, \subset (A \times I) \setminus \left(\gamma_{\left(-\frac{\pi}{4},\frac{\pi}{4}\right)}\times [0,1]\right).
$$

For convenience, we now change the angular $(\theta)$ coordinate on $(A \times I) \setminus \left(\gamma_{\left(-\frac{\pi}{4},\frac{\pi}{4}\right)}\times [0,1]\right)$ so that it is continuous on this region, by letting \[\vartheta = \left\{\begin{array}{rl}
						\pi - \theta & \mbox{if $\theta \in \left[\frac{\pi}{4}, \pi\right]$, and}\\
						-\pi-\theta & \mbox{if $\theta \in \left(-\pi,-\frac{\pi}{4}\right]$.}\end{array}\right.\]

This allows us to easily identify  $\mathbb{T}'$ with a tangle $\mathbb{T} \subset [1,2] \times \left[\frac{-3\pi}{4},\frac{3\pi}{4}\right] \times [0,1] \subset \R^3$ by sending a point \[(r,\vartheta,z)\in (A \times I) \setminus \left(\gamma_{\left(-\frac{\pi}{4},\frac{\pi}{4}\right)}\times [0,1]\right)\] to the point $(r,\vartheta,z)\in [1,2]\times \left[-\frac{3\pi}{4},\frac{3\pi}{4}\right]\times [0,1]$. If the ``standard" intersection of $\bL$ with the region $\gamma_{\left(-\frac{\pi}{4},\frac{\pi}{4}\right)} \times [0,1]$ consists of $m$ arcs, then $\mathbb{T}$ has $m$ top endpoints (with $\vartheta-$coordinate equal to $\frac{3\pi}{4}$) and $m$ bottom endpoints (with $\vartheta-$coordinate equal to $-\frac{3\pi}{4}$).

To $\mathbb{T}$, the Reshetikhin-Turaev construction \cite{reshetikhin-1990} for the quantum group $U_q(\mathfrak{sl}_2)$ associates a linear map
$$
J(\mathbb{T})\,\colon\, V_1^{\otimes m}\longrightarrow V_1^{\otimes m}
$$
which intertwines the quantum group action. Here, $V_1$ is the two-dimensional fundamental representation of $U_q(\mathfrak{sl}_2)$, with underlying vector space $V_1:=\mathbb{C}(q)v_1\oplus\mathbb{C}(q)v_{-1}$. The generators $E,F,K$ of $U_q(\mathfrak{sl}_2)$ act by
$$
Kv_1=qv_1\,,\qquad Kv_{-1}=q^{-1}v_{-1}\,,\qquad Ev_1=Fv_{-1}=0\,,\qquad
Ev_{-1}=v_1\,,\qquad Fv_{1}=v_{-1}\,.
$$

Note that when $m=0$ (i.e., if $\mathbb{T}$ is a link), $J(\mathbb{T})$ is given by scalar multiplication by the Jones polynomial $J(\mathbb{L})$. In general, $J(\mathbb{T})$ can be described by a state sum formula, as we will now explain. See \cite{khovanov-1997} for a reference.

As before, an enhanced resolution of $\mathbb{T}$ is an oriented $1-$manifold $\mathbb{S}\subset [1,2]\times \left[-\frac{3\pi}{4},\frac{3\pi}{4}\right]$ whose underlying unoriented $1-$manifold is the projection of a resolution $\mathbb{T}_{\mathcal{I}}$. Given an enhanced resolution $\mathbb{S}$ of $\mathbb{T}$, we can associate two sequences of up and down arrows: the bottom sequence $b(\mathbb{S})\in\{\uparrow,\downarrow\}^m$, obtained by reading off the local orientations of the arc components of $\mathbb{S}$ near the $m$ bottom endpoints; and the top sequence $t(\mathbb{S})\in\{\uparrow,\downarrow\}^m$, obtained by reading off the local orientations of the arc components of $\mathbb{S}$ near the $m$ top endpoints.

Each sequence ${\ba}\in\{\uparrow,\downarrow\}^m$ determines a standard basis vector in $V_1^{\otimes m}$, namely the vector obtained by replacing each $\uparrow$ by $v_1\in V_1$, and each $\downarrow$ by $v_{-1}\in V_1$. For example, the sequence $\ba=(\uparrow\uparrow\downarrow\uparrow\downarrow)\in\{\uparrow,\downarrow\}^5$ corresponds to the standard basis vector
$$
v_{\ba}:=v_1\otimes v_1\otimes v_{-1}\otimes v_1\otimes v_{-1}\,\in\,V_1^{\otimes 5}\,.
$$

The matrix entry $J(\mathbb{T}_{\mathcal{I}})_{\ba,\bb}:=v_{\ba}^*(J(\mathbb{T})(v_{\bb}))$ of the linear map $J(\mathbb{T}_{\mathcal{I}})\colon V_1^{\otimes m}\rightarrow V_1^{\otimes m}$ can now be defined explicitly by
\begin{equation}
\label{eqn:statesumtangle}
J(\mathbb{T}_{\mathcal{I}})_{\ba,\bb}:=
\sum_{
\begin{smallmatrix}
\mathbb{S}\in E(\mathbb{T}_{\mathcal{I}})\\
b(S)=\bb,\,
t(S)=\ba
\end{smallmatrix}
} q^{j(\mathbb{S})}\,,
\end{equation}
where $E(\mathbb{T}_{\mathcal{I}})$ is the set of all enhanced resolutions of $\mathbb{T}$ whose underlying unoriented $1-$manifold is equal to $\mathcal{P}(\mathbb{T}_{\mathcal{I}})$, and $j(\mathbb{S})$ is defined as follows. Let $G^{\mathbb{S}}\colon \mathbb{S}\rightarrow S^1$ be the Gauss map which sends a point $p\in \mathbb{S}\subset [1,2]\times \left[-\frac{3\pi}{4},\frac{3\pi}{4}\right]$ to the positive unit tangent vector of $\mathbb{S}$ at $p$. If we assume that $\mathbb{S}$ is vertical near its endpoints, then the image of the fundamental class of $\mathbb{S}$ defines a relative homology class $G^{\mathbb{S}}_*[\mathbb{S}]\in H_1(S^1,\{\pm i\})$, where $i=\sqrt{-1}\in S^1$. Now $j(\mathbb{S})$ is the algebraic intersection number $j(\mathbb{S}):=[\{1\}]\cdot G^{\mathbb{S}}_*[\mathbb{S}]$.

\subsection{Weight spaces}
As a vector space, $V_1^{\otimes m}$ decomposes into weight spaces, i.e. into eigenspaces for the action of $K\in U_q(\mathfrak{sl}_2)$:
$$
V_1^{\otimes m}=\bigoplus_{n=0}^m V_1^{\otimes m}[m-2n]
$$
Here, $V_1^{\otimes m}[\lambda]$ is the eigenspace for the eigenvalue $q^{\lambda}$ for the $K$-action, and the generators $E$ and $F$ act by maps $E_{\lambda}\colon V_1^{\otimes m}[\lambda]\rightarrow V_1^{\otimes m}[\lambda+2]$ and $F_{\lambda}\colon V_1^{\otimes m}[\lambda]\rightarrow V_1^{\otimes m}[\lambda-2]$, respectively.

Since $K$ acts on tensor products by $K(v\otimes w):=(Kv)\otimes (Kw)$, and since $v_1$ and $v_{-1}$ are eigenvectors for the $K$-action with eigenvalues $q^{\pm 1}$, each standard basis vector $v_{\ba}$ for $\ba\in\{\uparrow,\downarrow\}^m$, is an eigenvector for the $K$-action with
$$
Kv_{\ba}=q^{k(\ba)}v_{\ba}\,,
$$
where
$$
k(\ba)=\#(\mbox{up arrows in $\ba$})-\#(\mbox{down arrows $\ba$})\,.
$$
Thus, the weight space $V_1^{\otimes m}[\lambda]$ is the span of all $v_{\ba}$ with $k(\ba)=\lambda$. For the following discussion, it will be useful to observe that $k(\ba)$ can be identified with the algebraic intersection number $k(\ba)=[\ell]\cdot [\ba]$, where here the arrows in $\ba$ are viewed as upward and downward oriented vertical lines, and $\ell$ is a horizontal line, oriented to the right, and intersecting each of the arrows in $\ba$ in a single point.

\subsection{$SJ(\mathbb{L})$ and quantum trace}
If $\mathbb{S}$ is an enhanced resolution of $\mathbb{T}$ with $t(\mathbb{S})=\ba$ and $b(\mathbb{S})=\bb$ for $\ba,\bb\in\{\uparrow,\downarrow\}^m$, then $k(\ba)=[\ell]\cdot [\ba]=[\gamma_{\phi}]\cdot [\mathbb{S}]=[\ell]\cdot [\bb]=k(\bb)$, where $\gamma_{\phi}$ is the rightward-oriented horizontal line $\gamma_{\phi}:=[1,2]\times\{\phi\}$ for arbitrary $\phi\in \left[-\frac{3\pi}{4},\frac{3\pi}{4}\right]$. Hence it follows that $J(\mathbb{T})\colon V_1^{\otimes m}\rightarrow V_1^{\otimes m}$ preserves the weight space decomposition. (Alternatively, this follows from the fact that $J(\mathbb{T})$ intertwines the quantum group action).

%The map  $J(\mathbb{T})\colon V_1^{\otimes m}\rightarrow V_1^{\otimes m}$ preserves the weight space decomposition; indeed, this follows because $J(\mathbb{T})$ intertwines the quantum group action. More concretely, it follows from the state sum formula because we have $k(\ba)=[\ell]\cdot [\ba]= [\lambda]\cdot [\mathbb{S}]=[\ell]\cdot [\bb]=k(\bb)$ whenever $\mathbb{S}$ is an enhanced resolution with $t(\mathbb{S})=\ba$ and $b(\mathbb{S})=\bb$, and $\lambda$ is a rightward oriented horizontal line of the form $\lambda=[1,2]\times\{\phi\}$ for $\phi\in[\pi/4,-\pi/4]$.

Recalling that $[\gamma_{\phi}]\cdot [\mathbb{S}]=k(\mathbb{S})$ (where $k(\mathbb{S})$ is the $k-$degree defined immediately after equation \eqref{eqn:sjstate}), we see that the restriction of $J(\mathbb{T})$ to the weight space $V_1^{\otimes m}[k]$ is given by summing over enhanced resolutions with $k(\mathbb{S})=k$.

Now note that every enhanced resolution of $\mathbb{T}$ which satisfies $t(\mathbb{S})=b(\mathbb{S})$ can be lifted to an enhanced resolution of $\mathbb{L}$. Hence we have a bijection
$$
E(\mathbb{L})\,\stackrel{1:1}{\longleftrightarrow}\,\{\mathbb{S}\in E(\mathbb{T})\,\colon\,t(\mathbb{S})=b(\mathbb{S})\}\,.
$$
where $E(\mathbb{L})$ (resp., $E(\mathbb{T})$) denotes the set of all enhanced resolutions of $\mathbb{L}$ (resp., $\mathbb{T}$).

It is fairly straightforward to check that $j(\mathbb{S}')=j(\mathbb{S})+ k$ whenever $\mathbb{S}'$ is an enhanced resolution of $\mathbb{L}$ corresponding via the above bijection to an enhanced resolution $\mathbb{S}$ of $\mathbb{T}$ with $t(\mathbb{S})=b(\mathbb{S})$ and $k(\mathbb{S})=k$. Hence

%If $S'\in E(D)$ is the state corresponding to $S\in E(T)$ under this bijection, then $$\tau(S')=G^{S'}_*[S']\cdot[\{1\}]=G^{S'}_*[S]\cdot [\{1\}]+G^{S'}_*[C]\cdot [\{1\}]=\tau(S)+k(b(S))\,,$$ and hence $j(S')=j(S)+k(b(S))$. We can thus rewrite \eqref{statesumlink} as

$$
SJ(\mathbb{L}_{\mathcal{I}})=
\sum_{\mathbb{S}'\in E(\mathbb{L}_{\mathcal{I}})}q^{j(\mathbb{S}')}t^{k(\mathbb{S}')}
=
\sum_k (qt)^k
\sum_{
\begin{smallmatrix}
\mathbb{S}\in E(\mathbb{T}_{\mathcal{I}})\\
t(\mathbb{S})=b(\mathbb{S}),\,
k(\mathbb{S})=k
\end{smallmatrix}
}q^{j(\mathbb{S})}\,,
$$
and thus we obtain
$$
SJ(\mathbb{L})
=\sum_k (qt)^k\sum_{
\begin{smallmatrix}
\ba\in\{\uparrow,\downarrow\}^m\\
 k(\ba)=k
\end{smallmatrix}
}
J(\mathbb{T}_{\mathcal{I}})_{\ba,\ba}
=\sum_k (qt)^k\, \mbox{tr}(J(\mathbb{T})|_{V_1^{\otimes m}[k]})\,.
$$
where $\mbox{tr}(-)$ stands for the ordinary trace for $\mathbb{C}(q)$-linear endomorphisms. Setting $t$ equal to $1$, we recover the well-known fact that the Jones polynomial of $\mathbb{L}$ can be expressed as the quantum trace $\mbox{tr}_q(J(\mathbb{T}))$, defined via the action of the generator $K$ on  $V_1^{\otimes m}$ by $\mbox{tr}_q(J(\mathbb{T})):=\mbox{tr}(K\circ J(\mathbb{T}))$:
$$
J(\mathbb{L})
=\sum_k q^k\, \mbox{tr}(J(\mathbb{T})|_{V_1^{\otimes m}[k]})=\mbox{tr}_q(J(\mathbb{T}))\,.
$$

\subsection{Skein module interpretation of $SJ(\mathbb{L})$} \label{sec:Skein}
Let $a$ be a formal variable satisfying $q=-a^{-2}$, and let $\mathbb{L}_f$ denote the framed link $\mathbb{L}_f\subset A\times I$ obtained by equipping the (oriented) link $\mathbb{L}\subset A\times I$ with the framing induced by a compact oriented surface $F\subset A\times [0,1]$ with $\mathbb{L}\subset\partial F$ and $\partial F\setminus \mathbb{L}\subset (\partial A)\times [0,1]$.

Recall that the Kauffman bracket skein module of $A\times I$ is defined as the quotient \[K_a(A\times I):=M/R,\] where $M$ is the free $\mathbb{Z}[a^{\pm 1}]$-module generated by all isotopy classes of framed links in $A\times I$, and $R$ is the submodule of $M$ generated by all Kauffman bracket skein relations (cf. \cite{przytycki-1991}, \cite{turaev-1988}). Now it follows essentially from the definitions that $SJ(\mathbb{L})$ is equal to the element
$$
\varphi^{-1}([\mathbb{L}_f])\in \mathbb{Z}[q^{\pm 1}][z]\subset\mathbb{Z}[a^{\pm 1}][z]
$$
where $\varphi$ is the isomorphism
$$
\varphi\,\colon\,\mathbb{Z}[a^{\pm 1}][z]\stackrel{\cong}{\longrightarrow}K_a(A\times [0,1])
$$
given by sending $z^n$ to a collection of $n$ disjoint homologically nontrivial imbedded circles in $A\times\{1/2\}$, equipped with the framing induced by the surface $A\times\{1/2\}$.

\section{The Relationship between the Khovanov and Sutured Floer Gradings}

We have seen that if $\bL \subset A \times I$ is a link in the thickened annulus and $\cP(\bL) \subset A$ is a diagram for $\bL$  that has been placed in a ``standard" form via Construction \ref{const:Heegmultidiag}, we can construct from the data of $\cP(\bL)$ either:
\begin{enumerate}
	\item a filtered sutured Floer chain complex for $\boldSigma(A \times I, \bL)$, as described in Sections \ref{sec:background} and \ref{sec:SFH}, or
	\item a Khovanov chain complex for $\overline{\bL}$, the mirror of $\bL$, as described in Section \ref{sec:Kh}.
\end{enumerate}

Furthermore, there is a bijection between the generators of the two chain complexes (\cite[Prop. 6.2]{MR2141852}, \cite[Prop. 2.28]{AnnularLinks}).  Each generator can therefore be assigned either:
\begin{enumerate}
	\item a geometric $\frac{1}{2}\mathbb{Z}$ sutured Floer grading, called the ${\bf A}_S$ grading, or
	\item a representation-theoretic $\mathbb{Z}$ Khovanov grading, called the $k$--grading.
\end{enumerate}

We now show that these two gradings are closely related.  Namely:
\begin{theorem} \label{thm:Dictionary} Let $\bL \subset A \times I$ be a link in the product sutured manifold $A \times I$, $\overline{\bL}$ its mirror, and $\cP(\bL)$ (resp., $\cP(\overline{\bL})$) a diagram for $\bL$ (resp., $\overline{\bL}$) produced by Construction \ref{const:Heegmultidiag}.  Then if ${\bf x}$ is a generator of the associated (filtered) chain complex, we have:
\[k({\bf x}) = -2 {\bf A}_S({\bf x}).\]
\end{theorem}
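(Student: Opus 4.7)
The plan is to verify $k(\mathbf{x}) = -2\mathbf{A}_S(\mathbf{x})$ generator-by-generator, exploiting the fact that both gradings are controlled by intersections with $\gamma_0$. Fix a complete resolution $\bL_\cI$ of $\cP(\bL)$ with components $K_1,\ldots,K_c$; by \cite[Prop.~2.28]{AnnularLinks}, the Floer generators at vertex $\cI$ are in bijection with the Khovanov basis vectors $v_{\epsilon_1}\otimes\cdots\otimes v_{\epsilon_c}$ and hence with enhanced resolutions $\mathbb{S}$ of $\cP(\bL_\cI)$. Locally around each circle $K_i$, this bijection encodes $\mathbf{x}$ as a choice of one of the two perfect matchings in the $2k_i$-cycle of $\alpha$--$\beta$ intersections lying above the cup--cap shared endpoints along $K_i$.

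The formula derived in Section~\ref{sec:SFH} gives $\mathbf{A}_S(\mathbf{x}) = -\tfrac{1}{2}|\vec q| + \#\{x_j\in\mathbf{x} \mid x_j \in P\}$, and Section~\ref{sec:Kh} gives $k(\mathbf{x}) = [\gamma_0]\cdot[\mathbb{S}]$. Writing $P_\pm(\mathbb{S})$ for the number of positive/negative intersections of the oriented $\mathbb{S}$ with the outwardly oriented $\gamma_0$, we have $P_+(\mathbb{S}) + P_-(\mathbb{S}) = |\vec q|$ and $k(\mathbf{x}) = P_+(\mathbb{S}) - P_-(\mathbb{S})$. Thus the theorem will reduce to the geometric identity
\[\#\{x_j \in\mathbf{x} \mid x_j \in P\} = P_-(\mathbb{S}),\]
after which the short manipulation $-2\mathbf{A}_S(\mathbf{x}) = |\vec q| - 2 P_-(\mathbb{S}) = P_+(\mathbb{S}) - P_-(\mathbb{S}) = k(\mathbf{x})$ will finish the argument.

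To establish this identity, I would work locally at each $\vec q$-point $q$. Every intersection point of $\mathbf{x}$ lying in $P$ is the unique preimage in $\boldSigma(A\times I, \bL_\cI)$ of some $q\in\vec q$, so the claim is equivalent to showing that $\mathbf{x}$ contains the preimage of $q$ precisely when the corresponding crossing of $\mathbb{S}$ with $\gamma_0$ is negative. Under \cite[Prop.~2.28]{AnnularLinks} the two matchings along the circle $K_i$ containing $q$ correspond to the two orientations of $K_i$ in $\mathbb{S}$; and in the standard form of Construction~\ref{const:Heegmultidiag} the local picture at $q$ is simply a vertical strand through $A\times\{\tfrac{1}{2}\}$ flanked by adjacent cup and cap arcs. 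Direct inspection of this local model identifies the matching containing the preimage of $q$ as the one associated to the orientation of $K_i$ traversing $\gamma_0$ in the $-\theta$ direction, which is precisely the orientation producing a negative crossing.

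The main obstacle will be the careful tracking of orientation conventions in this local sign check: one must simultaneously coordinate the outward orientation of $\gamma_0$, the orientation of $\mathbb{S}$, and the matching-to-$v_\pm$ identification from \cite{AnnularLinks}. I would pin this down by verifying the model case of an unknot winding once around $A$ and possessing a single $\vec q$-crossing, where one checks directly that $v_+$ corresponds to the counterclockwise orientation (positive crossing with $\gamma_0$) and to the matching that does \emph{not} contain the preimage of $q$, while $v_-$ corresponds to the other orientation and matching. Since the local geometry at every $\vec q$-point in the standard form is a copy of this model, the general identity then follows by summing over $\vec q$-points.
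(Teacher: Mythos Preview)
Your proposal is correct and follows essentially the same route as the paper's proof: both reduce the identity to the local claim that an intersection point of $\mathbf{x}$ in $P$ is occupied precisely when the corresponding strand crosses $\gamma_0$ with a $\downarrow$ (your $P_-$), and then finish with the same two-line arithmetic using $|\vec q| = P_+(\mathbb S)+P_-(\mathbb S)$. The only difference is that you are more explicit about verifying the sign in that local claim via a model computation, whereas the paper simply asserts the correspondence; your extra care here is warranted but does not change the structure of the argument.
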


\begin{proof} We begin by noting that the generators of both the Khovanov and sutured Floer complex are in one-to-one correspondence with enhanced Kauffman states.  On the Khovanov side, this correspondence is explained in Section \ref{sec:Kh}.

On the sutured Floer side, recall that we have one generator for each $d$--tuple of intersection points between $\boldalpha$ and $\boldbeta_\cI$, for each $\cI \in \{0,1\}^\ell$.  Furthermore, each connected component of the diagram, $\cP(\bL_\cI)$, of a resolution gives rise to a cyclic alternating chain of $\alpha$ and $\beta$ circles in the Heegaard diagram for $\boldSigma(A \times I, \bL_\cI)$, each intersecting the next in a single point.  Hence, once a single intersection point in a chain is chosen, the other intersection points on that chain are completely determined by the requirement that a $d$--tuple of intersection points yielding a Heegaard Floer generator must occupy each $\alpha$ circle (resp., each $\beta$ circle) exactly once.  There are then two choices associated to each such cyclic chain, corresponding to the two choices of orientation for that connected component in the relevant enhanced Kauffman state.

Now let $\mathbb{S}$ be an enhanced Kauffman state for $\cP(\bL)$ (resp., $\cP(\overline{\bL})$) and ${\bf a} \in \{\uparrow,\downarrow\}^m$ the associated sequence of $\uparrow$'s and $\downarrow$'s seen along $\gamma_0$, oriented outward.  Section \ref{sec:Kh} tells us that \[k(\mathbb{S}) = \#(\uparrow) - \#(\downarrow).\]

Now recall that $w(\bL) = |\vec{q}|$ denotes the number of arcs of $\cP(\bL)$ in the standard region $\gamma_{\left[-\frac{\pi}{4},\frac{\pi}{4}\right]} \subset A$.  On the sutured Floer side, we have $|\vec{q}|$ distinguished intersection points in the special subsurface, $P$, lying along the preimage of $\gamma_0$ in $\Sigma$.  Furthermore, the correspondence between enhanced Kauffman states and generators of the sutured Floer chain complex guarantees that an intersection point is {\em occupied} (resp., {\em unoccupied}) iff the corresponding arrow for the enhanced Kauffman state at that position is $\downarrow$ (resp., $\uparrow$).  Hence, the end of Section \ref{sec:SFH} tells us that
\begin{eqnarray*}
{\bf A}_S(\mathbb{S}) &=& -\frac{1}{2}(|\vec{q}|) + \#(\downarrow)\\
				  &=& \frac{1}{2}(- \#(\uparrow) + \#(\downarrow))\\
				  &=& -\frac{1}{2}(k(\mathbb{S})),
\end{eqnarray*}
since $|\vec{q}| = \#(\uparrow) + \#(\downarrow)$.   We then see that the unique generator ${\bf x}$ associated to $\mathbb{S}$ satisfies

\[k({\bf x}) = -2{\bf A}_S({\bf x}),\] as desired.
\end{proof}
\bibliography{JacoFest}
\end{document}